\newtheorem{thm}{Theorem}[section]
\newtheorem{lem}[thm]{Lemma}
\newtheorem{prop}[thm]{Proposition}
\theoremstyle{definition}
\newtheorem{defn}[thm]{Definition}
\title{ Semiring arising as Lattice of Groupsemirings }
\author{A. R. Rajan}
\address{Department of Mathematics, University of Kerala, Kariavattom, Kerala, India}
\email{arrunivker@gmail.com}
\author{S. Sheena}
\address{Department of Mathematics, College of Engineering, Thiruvananthapuram, India}
\email{sheena\_s@cet.ac.in}
\author{C. S. Preenu}
\address{Department of Mathematics, University College Thiruvananthapuram, Kerala, India}
\email{cspreenu@gmail.com}
\begin{document}
\begin{abstract} 
Much study has been done on semigroups which are unions of groups. There are several ways in which a union of groups can be made into a semigroup in which each of the 
component groups arises as subgroups of the constructed semigroup.
An important  class of such unions is a semilattice of groups. Group semirings are 
semirings $(G,+,\cdot )$ where $(G,\cdot )$ is  a group and $(G,+)$ is a left zero semigroup.
We consider construction of semirings from classes of group semirings 
$\{G_\alpha :\alpha\in D \}$ indexed by a distributive lattice $D$. 
It is shown that if $S=\cup\{G_\alpha \}$ is a strong distributive lattice of group semirings $G_\alpha$ then the multiplicative semigroup $(S,\cdot)$ of the semiring $(S,+,\cdot)$ is a
Clifford semigroup and the additive semigroup $(S,+)$ is a left normal band. Further in  this case all the groups $G_\alpha $ are mutually isomorphic. 	
\end{abstract}  
\keywords{semiring, group semiring, union of group semirings, distributive lattice of group semirings,
strong distributive lattice of group semirings.}

\maketitle

\section{Introduction }

Semirings in which the multiplicative semigroup belong to some special
classes of semigroups have been studied
by several people. For example see \cite{sen1993maximal, bhuniya2010distributive} etc.     
Semilattice of groups and srtrong semilattice of groups are well established concepts in structure theory of semigroups. 
Similar constructions in semirings have been considered by Ghosh\cite{ghosh1999characterization}, Bandelt and Petrich \cite{bandelt1982subdirect} etc. where the semilattice is replaced by a distributive lattice. 
A union $S=\cup\{S_\alpha:\alpha\in D \}$ of semirings $S_\alpha$ where $D$ is a distributive lattice is said to be a distributive lattice of semirings if addition and multiplication in $S$ have the following properties.
For $x\in S_\alpha$ and $y\in S_\beta$
\[x+y\in S_{\alpha +\beta}\text{ and }x\cdot y\in S_{\alpha\beta}\]
where $+$ and $\cdot$ denote the join and meet in the distributive lattice $D$.

The above union $S=\cup\{S_\alpha:\alpha\in D \}$ is said to be a strong distributive lattice of semirings if there are families of homomorphisms of semirings 
$\{\phi_{\alpha ,\beta}:S_\alpha\to S_\beta \text{ for }\beta\le\alpha \}$
and $\{\psi_{\beta ,\alpha}:S_\beta\to S_\alpha \text{ for }\beta\le\alpha \}$
with certain properties such that 

\[x+y=x\psi_{\alpha,\alpha+\beta}+y\psi_{\beta ,\alpha+\beta}\]
and
\[x\cdot y=x\phi_{\alpha,\alpha\beta}\cdot y\phi_{\beta ,\alpha\beta}\]
where $x\in S_\alpha$ and $y\in S_\beta$.

Group semirings are semirings $(G,+,\cdot)$ where $(G,\cdot)$ is a group and 
$+$ is defined by $x+y=x$ for all $x,y\in G$. Here we consider a  distributive lattice $D$ of group semirings $\{ G_\alpha :\alpha\in D\}$ and obtain necessary and sufficient condition 
for a  distributive lattice $D$ of group semirings to be a strong distributive lattice of these group semirings. Further we show that
if $S$ is a strong distributive lattice  of group semirings $\{G_\alpha:\alpha\in D \}$ 
then the groups $G_\alpha$'s are mutually isomorphic and the structure homomorphisms 
$\phi_{\alpha ,\beta}$ and $\psi_{\beta ,\alpha}$ are mutually inverse isomorphisms.
Thus one set of structure homomorphisms  $\{\phi_{\alpha ,\beta}\}$ or 
$\{ \psi_{\beta ,\alpha}\}$ is sufficient to describe the construction. Further we show that
when $S$ is a strong distributive lattice  of group semirings $\{G_\alpha:\alpha\in D \}$ 
the multiplicative semigroup $(S,\cdot)$ is a Clifford semigroup and the additive semigroup
$(S,+)$ is a normal band. 

\section{Preliminaries}

Here we provide the definition and elementary properties relating to
regular semigroups and semirings. The main references are \cite{clifford1961algebraic}, \cite{golan2013semirings} \cite{howie1995fundamentals} and \cite{rajan2011idempotents}.

\subsection{Union of Groups }

We follow the notations and terminology of \cite{clifford1961algebraic} and \cite{howie1995fundamentals} for
concepts in semigroup theory.

A semigroup $S$ is said to be a semilattice union of groups if
there is a semilattice $\Lambda$ and groups $\{G_\alpha:\alpha\in \Lambda \}$
such that 
\[S=\cup_{\alpha\in \Lambda }G_\alpha \]
and
\[G_\alpha G_\beta\subseteq G_{\alpha\beta} \]
for all $\alpha,\beta\in\Lambda$.

A strong semilattice of groups is defined as follows.

\begin{defn}\cite{howie1995fundamentals}\label{strongsldefn}   
Let $\Lambda$ be a semilattice and 
\[S=\cup_{\alpha\in \Lambda }G_\alpha \]
be a semilattice of groups where each $G_\alpha$ is a  group.
Then $S$ is said to be a \emph{strong semilattice of groups} if there is a
family of homomorphisms $\{\phi_{\alpha ,\beta}:G_\alpha\to G_\beta :\beta\le\alpha\}$ such that
\begin{enumerate}[label=(\roman*)]
\item \label{one} $\phi_{\alpha ,\alpha}:G_\alpha\to G_\alpha$ is identity map.
\item\label{two}  $\phi_{\alpha ,\beta}\phi_{\beta ,\gamma}=\phi_{\alpha ,\gamma}$ whenever 
$\gamma\le\beta\le\alpha .$
\item If $x\in G_\alpha$ and $y\in G_\beta$ then
\[xy=x\phi_{\alpha ,\alpha\beta}y\phi_{\beta ,\alpha\beta}.\]
\end{enumerate} 

\end{defn}

It may be observed that if $\{G_\alpha :\alpha\in \Lambda \}$
be a family  of groups indexed by a semilattice $\Lambda$ and
 $\{\phi_{\alpha ,\beta}:G_\alpha\to G_\beta :\beta\le\alpha\}$ 
is a family of homomorphisms satisfying \ref{one} and \ref{two} above then  
\[S=\cup_{\alpha\in \Lambda }G_\alpha \]
is a semigroup with product defined by
\[xy=x\phi_{\alpha ,\alpha\beta}y\phi_{\beta ,\alpha\beta}\]
for all $x\in G_\alpha$ and $y\in G_\beta$. 
Thus we can construct a strong semilattice of the groups.

\section{Distributive Lattice of Semirings}

We assume familiarity with the notations and terminology of semirings as in 
\cite{golan2013semirings},\cite{narasimha2022extended} and \cite{sen1993maximal}.
A semiring $S$ is a structure $(S,+,\cdot)$ where $(S,+)$ and $(S,\cdot)$ are semigroups and
\[ x(y+z)=xy+xz\text{ and }(x+y)z=xz+yz\] 
for all $x,y,z\in S$ where $xy $ stands for $x\cdot y$.

A distributive lattice is denoted by $(D,+,\cdot)$ where $+$ stands for the join and $\cdot$ stands for meet. A distributive lattice of semirings is  a union
$\cup\{ S_\alpha:\alpha\in D\}$ of semirings such that 
for $x\in S_\alpha$ and $y\in S_\beta$
\[x+y\in S_{\alpha +\beta}\text{ and }x\cdot y\in S_{\alpha\beta}\]
where $+$ and $\cdot$ denote the join and meet in the distributive lattice $D$.

A strong distributive lattice of semirings is defined as follows.
\begin{defn}\cite{bandelt1982subdirect}\label{strongslgdefn}   
Let $D=(D,+,\cdot )$ be a distributive lattice and 
\[S=\cup_ {\alpha\in D}S_\alpha \]
be a distributive lattice of semirings $\{S_\alpha :\alpha\in D\}$.
Then $S$ is said to be a \emph{strong distributive lattice of semirings} if there
are two families of  homomorphisms 
$\{\phi_{\alpha ,\beta}:S_\alpha\to S_\beta :\beta\le\alpha\}$  and
$\{\psi_{\beta,\alpha}:S_\beta\to S_\alpha :\beta\le\alpha\}$ such that
\begin{enumerate}
\item[(i)] $\phi_{\alpha ,\alpha}:S_\alpha\to S_\alpha$ is identity map.
\item[(ii)]  $\phi_{\alpha ,\beta}\phi_{\beta ,\gamma}=\phi_{\alpha ,\gamma}$ whenever 
$\gamma\le\beta\le\alpha .$
\item[(i)'] $\psi_{\alpha ,\alpha}:S_\alpha\to S_\alpha$ is identity map.
\item[(ii)']  $\psi_{\gamma ,\beta}\psi_{\beta ,\alpha}=\psi_{\gamma ,\alpha}$ whenever 
$\gamma\le\beta\le\alpha .$
\item[(iii)] If $\alpha +\beta\le\delta$ then
$\phi_{\alpha ,\alpha\beta}\psi_{\alpha\beta ,\beta}=\psi_{\alpha ,\delta}\phi_{\delta ,\beta}$.\end{enumerate} 
Further for $x\in S_\alpha$ and $y\in S_\beta$
\[ xy=x\phi_{\alpha ,\alpha\beta}y\phi_{\beta ,\alpha\beta}\]
and
\[ x+y=x\psi_{\alpha ,\alpha+\beta}+y\psi_{\beta ,\alpha +\beta}.\]
\end{defn}

Now we show that every group can be realised as a semiring by keeping the multiplication as the group operation and defining addition in such a way that the
 resulting semigroup is a  left zero semigroup or  a right zero semigroup.
\begin{prop}
Let $G$ be a group.For $x,y\in G$ define 
\[x+y=x\]
for all $x,y\in G$.
Then $(G,+,\cdot )$ is a semiring where $\cdot$ is the multiplication in the group $G$.\qed 
\end{prop}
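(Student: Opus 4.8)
The plan is simply to verify the three defining axioms of a semiring for the structure $(G,+,\cdot)$ with $x+y=x$. The multiplicative part requires no work: $(G,\cdot)$ is a group by hypothesis, hence in particular a semigroup. For the additive part, I would observe that for all $x,y,z\in G$ one has $(x+y)+z=x+z=x$ and $x+(y+z)=x+y=x$, so $+$ is associative and $(G,+)$ is a semigroup; in fact it is a left zero semigroup, which is precisely the form required for a group semiring as described in the introduction.

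It then remains to check the two distributive laws. Fixing $x,y,z\in G$, since $y+z=y$ we get $x(y+z)=xy$, while the definition of $+$ gives $xy+xz=xy$; hence $x(y+z)=xy+xz$. Symmetrically, since $x+y=x$ we get $(x+y)z=xz$, while $xz+yz=xz$; hence $(x+y)z=xz+yz$. This establishes all the semiring axioms, so $(G,+,\cdot)$ is a semiring. There is no real obstacle in this argument; the only point deserving a moment's care is to apply the one-sided rule $a+b=a$ in its correct orientation on each side of each identity, but in every case both sides collapse to a single group product and the equality is immediate.
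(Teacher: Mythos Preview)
Your proposal is correct: you verify associativity of $+$, note that $(G,\cdot)$ is already a semigroup, and check both distributive laws by direct computation with the rule $a+b=a$. The paper itself gives no proof at all---the proposition is stated with an immediate \qed---so your argument simply supplies the routine verification that the authors leave to the reader, and there is nothing to compare.
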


We call such semirings as group semirings.
For $x,y\in G$ we may define 
\[x+y=y\]
for all $x,y\in G$.
Then also $(G,+,\cdot )$ is a semiring. Here the semigroup $(G,+)$ is a right zero semigroup.

In the following by a group semiring we mean a group $G$ with addition defined by
\[x+y=x\]
for all $x,y\in G$.

\subsection{Distributive Lattice of Group Semirings}

Let $(D,+,\cdot)$ be a distributive lattice and $\{G_\alpha ,\alpha\in D\}$ be a family of group semirings.
Then
\[S=\cup_{\alpha\in D} G_\alpha\]
is said to be a distributive lattice of group semirings $G_\alpha$ if 
for $x\in G_\alpha$ and $y\in G_\beta$
\[x+y\in G_{\alpha +\beta}\text{ and }x\cdot y\in G_{\alpha\beta}.\]
 
 Also $S=\cup_{\alpha\in D} G_\alpha$ is said to be a
 strong  distributive lattice of group semirings $G_\alpha$ if 
there are two families of homomorphisms
 $\{ \phi_{\alpha ,\beta}:G_\alpha\to G_\beta \text{ for }\beta\le \alpha\}$
and 
 $\{ \psi_{\beta ,\alpha}:G_\beta\to G_\alpha \text{ for }\beta\le \alpha\}$ with the above defined properties,
such that for $x\in G_\alpha$ and $y\in G_\beta$
\[ x+y=x\psi_{\alpha ,\alpha+\beta}+y\psi_{\beta ,\alpha +\beta}=x\psi_{\alpha,\alpha+\beta}\]
and
\[ xy=x\phi_{\alpha ,\alpha\beta}y\phi_{\beta ,\alpha\beta}.\]

First we look into some of the properties of $S$ and  these homorphisms $\phi_{\alpha ,\beta}$ and
$\psi_{\beta ,\alpha}$.
 
\begin{thm}\label{stronglatticethm}   
Let $(D,+,\cdot )$ be a distributive lattice and  $S=\cup\{G_\alpha :\alpha\in D \} $ 
be a strong distributive lattice  of groupsemirings $G_\alpha$ with connecting homomorphisms
 $\{ \phi_{\alpha ,\beta}:G_\alpha\to G_\beta \text{ for }\beta\le \alpha\}$
and 
 $\{ \psi_{\beta ,\alpha}:G_\beta\to G_\alpha \text{ for }\beta\le \alpha\}$.
 
 Then we have  the following. Here for each $\alpha\in D$ we denote by $e_\alpha$ the
 identity of the group $G_\alpha$.
\begin{enumerate}[label=\upshape{(\roman*)}]
\item \label{stl1}If $\beta\le \alpha$ in $D$ then $e_\alpha e_\beta =e_\beta$ and
$e_\alpha +e_\beta =e_\alpha$.
\item \label{stl2}If $x\in G_\alpha$ and $y,y'\in G_\beta$ for some $\alpha,\beta\in D$ then
$x+y=x+y'$.
\item \label{stl3}For $\beta\le\alpha$
\[x\phi_{\alpha ,\beta}=xe_\beta\text{ and }y\psi_{\beta,\alpha}=y+e_\alpha\]
for $x\in G_\alpha$ and $y\in G_\beta$.
 \item \label{stl4}If $\gamma\le\beta\le\alpha$ in $D$ then
$\phi_{\alpha ,\beta}\phi_{\beta,\gamma}=\phi_{\alpha,\gamma}$.
\item \label{stl5}Let $\beta\le\alpha$ in $D$. Then $\phi_{\alpha ,\beta}:G_\alpha\to G_\beta$ is an isomorphism and $\psi_{\beta ,\alpha}=(\phi_{\alpha ,\beta})^{-1}$.
\item \label{stl6}If $\alpha+\beta\le\delta$ and $\gamma\le\alpha\beta$ then
$\phi_{\alpha ,\gamma}\psi_{\gamma ,\beta}=\psi_{\alpha ,\delta}\phi_{\delta ,\beta}$.
\end{enumerate} 

\end{thm}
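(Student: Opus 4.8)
The plan is to derive all six statements from the two defining identities — the product identity $xy=x\phi_{\alpha,\alpha\beta}\,y\phi_{\beta,\alpha\beta}$ and the sum identity $x+y=x\psi_{\alpha,\alpha+\beta}$ for $x\in G_\alpha$, $y\in G_\beta$ — together with the facts that every $\phi_{\alpha,\beta}$ and $\psi_{\beta,\alpha}$ is a group homomorphism (hence sends the relevant identity to the relevant identity), that $\phi_{\alpha,\alpha}$ and $\psi_{\alpha,\alpha}$ are identity maps, and that on a single $G_\alpha$ the operations of $S$ restrict to the group product and to left-zero addition. Items \ref{stl1}--\ref{stl4} are then direct substitutions: when $\beta\le\alpha$ we have $\alpha\beta=\beta$ and $\alpha+\beta=\alpha$, so the product identity applied to $e_\alpha,e_\beta$ gives $e_\alpha e_\beta=(e_\alpha\phi_{\alpha,\beta})(e_\beta\phi_{\beta,\beta})=e_\beta e_\beta=e_\beta$, and the sum identity gives $e_\alpha+e_\beta=e_\alpha\psi_{\alpha,\alpha}=e_\alpha$, proving \ref{stl1}; \ref{stl2} is immediate since $x+y=x\psi_{\alpha,\alpha+\beta}$ does not involve $y$; for \ref{stl3} apply the product identity to $xe_\beta$ to get $xe_\beta=x\phi_{\alpha,\beta}$ and the sum identity to $y+e_\alpha$ to get $y+e_\alpha=y\psi_{\beta,\alpha}+e_\alpha=y\psi_{\beta,\alpha}$ by left-zero in $G_\alpha$; and \ref{stl4} follows by applying \ref{stl3} twice with associativity of $\cdot$ and \ref{stl1}: $x\phi_{\alpha,\beta}\phi_{\beta,\gamma}=(xe_\beta)e_\gamma=x(e_\beta e_\gamma)=xe_\gamma=x\phi_{\alpha,\gamma}$ (this is also just condition (ii) of Definition \ref{strongslgdefn}).

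The real content is \ref{stl5}, and this is where I expect the only genuine obstacle. The easy half, $\psi_{\beta,\alpha}\phi_{\alpha,\beta}=\mathrm{id}_{G_\beta}$, comes from \ref{stl3}, distributivity and \ref{stl1}: for $y\in G_\beta$, $y\psi_{\beta,\alpha}\phi_{\alpha,\beta}=(y+e_\alpha)e_\beta=ye_\beta+e_\alpha e_\beta=y+e_\beta=y$. The other half cannot be obtained by the same kind of manipulation — indeed without distributivity the statement fails — so I would use distributivity together with cancellation in a group. For $y\in G_\beta$ and $x,z\in G_\alpha$ compute $(y+x)z$ in two ways. On one hand $y+x=y\psi_{\beta,\alpha}\in G_\alpha$ by \ref{stl2} and \ref{stl3}, so $(y+x)z=(y\psi_{\beta,\alpha})z$ is a product inside the group $G_\alpha$. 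On the other hand $(y+x)z=yz+xz$, where $yz=y\,(z\phi_{\alpha,\beta})\in G_\beta$ and $xz\in G_\alpha$, so $yz+xz=(yz)\psi_{\beta,\alpha}=(y\psi_{\beta,\alpha})\bigl(z\phi_{\alpha,\beta}\psi_{\beta,\alpha}\bigr)$ since $\psi_{\beta,\alpha}$ is a homomorphism. Cancelling $y\psi_{\beta,\alpha}$ in $G_\alpha$ yields $z=z\phi_{\alpha,\beta}\psi_{\beta,\alpha}$ for every $z$, i.e.\ $\phi_{\alpha,\beta}\psi_{\beta,\alpha}=\mathrm{id}_{G_\alpha}$. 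Together with the easy half this makes $\phi_{\alpha,\beta}$ a bijective semiring homomorphism with inverse $\psi_{\beta,\alpha}$, hence an isomorphism.

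Finally, for \ref{stl6} I would reduce both composites to expressions in the identity elements using \ref{stl3}: for $x\in G_\alpha$ the left-hand side applied to $x$ is $(xe_\gamma)\psi_{\gamma,\beta}$, while expanding the right-hand side, $x\psi_{\alpha,\delta}\phi_{\delta,\beta}=(x+e_\delta)e_\beta=xe_\beta+e_\delta e_\beta=xe_\beta+e_\beta=(xe_\beta)\psi_{\alpha\beta,\beta}$ (the last step by left-zero, since $xe_\beta\in G_{\alpha\beta}$ and $e_\beta\in G_\beta$). The general identity $e_\alpha e_\beta=e_{\alpha\beta}$ (from the product identity, as homomorphisms preserve identities) gives $xe_\beta=xe_{\alpha\beta}$, and $\gamma\le\alpha\beta$ together with \ref{stl1} gives $xe_\gamma=x(e_{\alpha\beta}e_\gamma)=(xe_{\alpha\beta})\phi_{\alpha\beta,\gamma}$; so the claim reduces to $\phi_{\alpha\beta,\gamma}\psi_{\gamma,\beta}=\psi_{\alpha\beta,\beta}$ on $G_{\alpha\beta}$. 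Since $\gamma\le\alpha\beta\le\beta$, condition (ii)$'$ of Definition \ref{strongslgdefn} gives $\psi_{\gamma,\beta}=\psi_{\gamma,\alpha\beta}\psi_{\alpha\beta,\beta}$, and \ref{stl5} gives $\phi_{\alpha\beta,\gamma}\psi_{\gamma,\alpha\beta}=\mathrm{id}$, so $\phi_{\alpha\beta,\gamma}\psi_{\gamma,\beta}=\psi_{\alpha\beta,\beta}$, which completes the argument.
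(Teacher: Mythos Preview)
Your proof is correct throughout. Items \ref{stl1}--\ref{stl4} are handled exactly as in the paper. The genuine differences are in \ref{stl5} and \ref{stl6}.

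For \ref{stl5} you treat $\psi_{\beta,\alpha}\phi_{\alpha,\beta}=\mathrm{id}_{G_\beta}$ the same way the paper does. For the reverse composite you compute $(y+x)z$ two ways and cancel in $G_\alpha$; this is valid, but your assertion that ``the other half cannot be obtained by the same kind of manipulation'' is not accurate. The paper gets it by a short direct computation: using \ref{stl2} to replace $e_\alpha$ by $x$ (both lie in $G_\alpha$), one has
\[
x\phi_{\alpha,\beta}\psi_{\beta,\alpha}=xe_\beta+e_\alpha=xe_\beta+x=xe_\beta+xe_\alpha=x(e_\beta+e_\alpha)=xe_\alpha=x,
\]
which is both simpler and avoids the auxiliary element $y$ and cancellation. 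Your argument does have the merit of making the use of right distributivity explicit.

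For \ref{stl6} the paper works purely with the maps: from $\gamma\le\alpha,\beta\le\delta$ and \ref{stl4} one has $\phi_{\delta,\gamma}=\phi_{\delta,\alpha}\phi_{\alpha,\gamma}=\phi_{\delta,\beta}\phi_{\beta,\gamma}$, and inverting via \ref{stl5} gives $\phi_{\alpha,\gamma}=\psi_{\alpha,\delta}\phi_{\delta,\beta}(\psi_{\gamma,\beta})^{-1}$, hence the claim. Your element-level reduction through \ref{stl3} down to $\phi_{\alpha\beta,\gamma}\psi_{\gamma,\beta}=\psi_{\alpha\beta,\beta}$ is also correct but noticeably longer; the paper's route is more economical and exhibits the identity as a formal consequence of the transitivity axioms plus \ref{stl5}.
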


\begin{proof}
From the description of sum and product in strong distributive lattice of group semirings we have for $\beta\le\alpha$
\begin{eqnarray*}
 %\begin{equation*}
 % \begin{split}
 e_\alpha e_\beta &=& e_\alpha\phi_{\alpha,\alpha\beta}e_\beta\phi_{\beta,\alpha\beta}\\
 &=&e_\alpha\phi_{\alpha,\beta}e_\beta\phi_{\beta,\beta}\text{ since }\alpha\beta=\beta\\
  &=&e_\beta e_\beta\text{ since }\phi_{\alpha,\beta}\text{ is a homomorphism}\\
 &=& e_\beta .
 \end{eqnarray*} 
 %\end{split}
 % \end{equation*}
Similarly we can show that $e_\alpha +e_\beta =e_\alpha$. This proves \ref{stl1}.

Let $x\in G_\alpha$ and $y,y'\in G_\beta$ for some $\alpha,\beta\in D$. Since $S$ is  a strong distributive lattice of group semirings we have
$x+y=x\psi_{\alpha,\alpha+\beta}=x+y'$. Hence \ref{stl2}.

Now let $\beta\le\alpha $ and $x\in G_\alpha$.
Then  
\begin{align*}
xe_\beta &= x\phi_{\alpha,\beta}e_\beta\phi_{\beta,\beta}&\text{ since }\alpha\beta=\beta\\
&= x\phi_{\alpha,\beta}e_\beta &\text{ since }\phi_{\beta,\beta}\text{ is identity map}\\
& = x\phi_{\alpha,\beta}. 
\end{align*}

Similarly we can see that for $y\in G_\beta$
\[ y+e_\alpha =y\psi_{\beta,\alpha }.\]
This proves \ref{stl3}.\\

% Towards proving statement \ref{stl4} consider $\gamma\le\beta\le\alpha$ and $x\in G_\alpha$.
% Then  
% \begin{align*}
% x\phi_{\alpha,\beta}\phi_{\beta,\gamma}&=(xe_\beta )e_\gamma\\
% &= x(e_\beta e_\gamma)\\
 % &= xe_\gamma&\text{ by \ref{stl1} above since }\gamma\le\beta\\
 % &= x\phi_{\alpha,\gamma}&\text{ by \ref{stl2} above}.
 % \end{align*}

Now consider $\beta\le\alpha$ and $x\in G_\alpha$. Then
\[ x\phi_{\alpha,\beta}\psi_{\beta,\alpha}=xe_\beta +e_\alpha .\]
Now by \ref{stl2} above
 \begin{align*}
xe_\beta +e_\alpha &=xe_\beta +x &\text{ since } e_\alpha ,x\in G_\alpha\\
&= xe_\beta +xe_\alpha &\text{ since }x=xe_\alpha\\
&= x(e_\beta +e_\alpha )\\
&= xe_\alpha &\text{ since }e_\alpha+e_\beta=e_\alpha\\
& = x.
 \end{align*}

Similarly we can see that for $y\in G_\beta$
\[ y\psi_{\beta,\alpha}\phi_{\alpha,\beta}=(y+e_\alpha )e_\beta =y .\]
Thus $\phi_{\alpha,\beta} $ and $\psi_{\beta,\alpha}$ are mutually inverse isomorphisms.
Hence \ref{stl5} holds.

Now let $\alpha+\beta\le\delta$ and $\gamma\le\alpha\beta$. Then
$\gamma\le \alpha\le\delta$ and $\gamma\le\beta\le \delta$. So
 
\begin{align*}
  \phi_{\delta ,\gamma}&=\phi_{\delta,\alpha}\phi_{\alpha,\gamma}&\text{ by \ref{stl4} above}\\
  &= \phi_{\delta,\beta}\phi_{\beta,\gamma}&\text{ again by \ref{stl4}}
\end{align*}

So we have 
\begin{eqnarray*}
\phi_{\alpha ,\gamma} &=& (\phi_{\delta,\alpha})^{-1}\phi_{\delta,\gamma}\\
&=& (\phi_{\delta,\alpha})^{-1}\phi_{\delta,\beta}\phi_{\beta,\gamma}\\
&=& \psi_{\alpha,\delta}\phi_{\delta,\beta}(\psi_{\gamma.\beta})^{-1}.
\end{eqnarray*}
Therefore
$\phi_{\alpha ,\gamma}\psi_{\gamma ,\beta}=\psi_{\alpha ,\delta}\phi_{\delta ,\beta}$.

\end{proof}

We now provide the construction of strong distributive lattice of group semirings starting
the a collection $\{G_\alpha:\alpha\in D \}$ where $D$ is a distributive lattice.
We see that unlike the description in \cite{bandelt1982subdirect} and \cite{ghosh1999characterization} only one family of homorphisms is suffient for the description. Moreover it turns out that all the groups $G_\alpha$ are isomorphic to each other.

\begin{thm}\label{sdl}
Let $(D,+,\cdot)$ be a distributive lattice and $\{G_\alpha:\alpha\in D \}$ be a family of group semirings. Let 
$\{ \phi_{\alpha ,\beta}:G_\alpha\to G_\beta \text{ for }\beta\le \alpha\}$
be a family of isomorphisms satisfying the following.
\begin{enumerate}[left=\parindent,label={\bf  (SDL\arabic*)}]
\item $\phi_{\alpha,\alpha}$ is the identity map on $G_\alpha$.
\item If $\gamma\le\beta\le\alpha$ in $D$ then
$\phi_{\alpha ,\beta}\phi_{\beta,\gamma}=\phi_{\alpha,\gamma}$.
\end{enumerate} 
Then  
\[S=\cup_{\alpha\in D }G_\alpha \]
is a semiring with product and sum defined as follows. For 
$x\in G_\alpha$ and $y\in G_\beta$
\begin{equation}
    x\cdot y=x\phi_{\alpha ,\alpha\beta}\cdot y\phi_{\beta ,\alpha\beta}
\end{equation}
 and  
 \begin{equation}
     x+y=x\psi_{\alpha ,\alpha +\beta}
 \end{equation}
where $\psi_{\beta,\alpha}=(\phi_{\alpha,\beta})^{-1}$ for $\beta\le\alpha$.
\end{thm}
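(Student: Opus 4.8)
The plan is to verify, in order, that $(S,\cdot)$ is a semigroup, that $(S,+)$ is a semigroup, and that the two distributive laws hold. Throughout I would use that the $G_\alpha$ are pairwise disjoint, so that each element of $S$ lies in a unique $G_\alpha$ and the two operations are well defined; that $D$ is distributive; and that each $\phi_{\alpha,\beta}$, hence each $\psi_{\beta,\alpha}=\phi_{\alpha,\beta}^{-1}$, is a bijective homomorphism (in particular multiplicative).

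Associativity of the multiplication costs nothing new: $(D,\cdot)$ is a meet-semilattice and \textbf{(SDL1)}--\textbf{(SDL2)} are exactly conditions \ref{one}--\ref{two} of Definition~\ref{strongsldefn} for the family $\{\phi_{\alpha,\beta}\}$, so the remark following that definition already gives that $(S,\cdot)$, with $x\cdot y=x\phi_{\alpha,\alpha\beta}\,y\phi_{\beta,\alpha\beta}$, is a semigroup — in fact a strong semilattice of the groups $(G_\alpha,\cdot)$ — and this uses only that the $\phi_{\alpha,\beta}$ are homomorphisms. For associativity of $+$, inverting \textbf{(SDL2)} first yields the dual law $\psi_{\gamma,\beta}\psi_{\beta,\alpha}=\psi_{\gamma,\alpha}$ for $\gamma\le\beta\le\alpha$ together with $\psi_{\alpha,\alpha}=\mathrm{id}$. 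Then for $x\in G_\alpha$, $y\in G_\beta$, $z\in G_\gamma$ we have $x+y=x\psi_{\alpha,\alpha+\beta}\in G_{\alpha+\beta}$, whence $(x+y)+z=(x\psi_{\alpha,\alpha+\beta})\psi_{\alpha+\beta,\alpha+\beta+\gamma}=x\psi_{\alpha,\alpha+\beta+\gamma}$ using the dual law and $\alpha\le\alpha+\beta\le\alpha+\beta+\gamma$; and since $y+z\in G_{\beta+\gamma}$ we also get $x+(y+z)=x\psi_{\alpha,\alpha+(\beta+\gamma)}=x\psi_{\alpha,\alpha+\beta+\gamma}$, so the two sides agree. (This incidentally shows $x+y=x$ when $\alpha=\beta$, so $+$ restricts to a left-zero band on each $G_\alpha$.)

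The substantive part is the distributivity, and the single extra ingredient is the identity
\[\phi_{\alpha,\gamma}\,\psi_{\gamma,\beta}=\psi_{\alpha,\delta}\,\phi_{\delta,\beta}\qquad\text{whenever}\qquad\gamma\le\alpha\beta\ \text{ and }\ \alpha+\beta\le\delta.\]
As the proof of Theorem~\ref{stronglatticethm}\,\ref{stl6} shows, this is a purely formal consequence of $\gamma\le\alpha\le\delta$, $\gamma\le\beta\le\delta$, \textbf{(SDL2)} and $\psi=\phi^{-1}$ (expand $\phi_{\delta,\gamma}$ two ways by \textbf{(SDL2)} and rearrange using $\psi=\phi^{-1}$); in particular it does not presuppose the semiring structure being constructed. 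Granting it, take $x\in G_\alpha$, $y\in G_\beta$, $z\in G_\gamma$ and put $\delta=\alpha(\beta+\gamma)=\alpha\beta+\alpha\gamma$; note $\alpha\beta\le\delta\le\alpha$ and $\delta\le\beta+\gamma$. Expanding $x(y+z)$ and $xy+xz$ by the product and sum formulas and using that $\psi_{\alpha\beta,\delta}$ is multiplicative, each reduces to a product in $G_\delta$ of an $x$-factor and a $y$-factor. The $x$-factors are $x\phi_{\alpha,\delta}$ and $(x\phi_{\alpha,\alpha\beta})\psi_{\alpha\beta,\delta}$, which coincide because $\alpha\beta\le\delta\le\alpha$ gives $\phi_{\alpha,\alpha\beta}\psi_{\alpha\beta,\delta}=\phi_{\alpha,\delta}$ by \textbf{(SDL2)} and $\psi=\phi^{-1}$; the $y$-factors are $y\,\psi_{\beta,\beta+\gamma}\phi_{\beta+\gamma,\delta}$ and $y\,\phi_{\beta,\alpha\beta}\psi_{\alpha\beta,\delta}$, which coincide by the displayed identity applied with $(\alpha,\beta,\gamma,\delta)\leftarrow(\beta,\delta,\alpha\beta,\beta+\gamma)$, after checking $\alpha\beta\le\beta\delta$ and $\beta+\delta\le\beta+\gamma$. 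The right distributive law $(x+y)z=xz+yz$ is symmetric, with $\varepsilon=(\alpha+\beta)\gamma=\alpha\gamma+\beta\gamma$ in place of $\delta$: the $z$-factors both equal $z\phi_{\gamma,\varepsilon}$ by \textbf{(SDL2)} and $\psi=\phi^{-1}$, and the $x$-factors $x\,\psi_{\alpha,\alpha+\beta}\phi_{\alpha+\beta,\varepsilon}$ and $x\,\phi_{\alpha,\alpha\gamma}\psi_{\alpha\gamma,\varepsilon}$ coincide by the displayed identity with $(\alpha,\beta,\gamma,\delta)\leftarrow(\alpha,\varepsilon,\alpha\gamma,\alpha+\beta)$.

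I expect the only real obstacle to be the index bookkeeping in the last step: one must keep careful track of which elements of $D$ lie below which, so that every $\phi$ and $\psi$ that appears is genuinely defined, and must correctly match the four parameters each time the displayed identity is invoked. Once that identity is isolated and seen to be independent of the semiring structure, what remains is routine manipulation with homomorphisms, the relation $\psi=\phi^{-1}$, and the distributive law of $D$.
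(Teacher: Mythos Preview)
Your proposal is correct and follows essentially the same route as the paper: associativity of $\cdot$ via the strong-semilattice construction, associativity of $+$ via the dual transitivity law for $\psi$, and distributivity via the compatibility identity $\phi_{\alpha,\gamma}\psi_{\gamma,\beta}=\psi_{\alpha,\delta}\phi_{\delta,\beta}$, which the paper isolates as a separate lemma (Lemma~\ref{compatibility}) proved exactly as you describe. The only cosmetic differences are that the paper writes out $(x+y)z=xz+yz$ in full and leaves the other distributive law to ``similarly,'' whereas you sketch both, and your index checks for the lemma invocations are a bit more explicit than the paper's.
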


Before starting the proof of the theorem we state a result which will be used in the proof.
\begin{lem}\label{compatibility}
If $\alpha+\beta\le\delta$ and $\gamma\le\alpha\beta$ then
$\phi_{\alpha ,\gamma}\psi_{\gamma ,\beta}=\psi_{\alpha ,\delta}\phi_{\delta ,\beta}$.
\end{lem}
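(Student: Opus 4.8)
The plan is to prove the identity $\phi_{\alpha,\gamma}\psi_{\gamma,\beta}=\psi_{\alpha,\delta}\phi_{\delta,\beta}$ purely from the cocycle condition (SDL2) and the fact that each $\phi_{\alpha,\beta}$ is an isomorphism with $\psi_{\beta,\alpha}=(\phi_{\alpha,\beta})^{-1}$; this is exactly the computation already carried out in the proof of Theorem \ref{stronglatticethm}\ref{stl6}, so I would essentially transcribe that argument into a self-contained lemma. First I would record the order relations that follow from the hypotheses: since $\gamma\le\alpha\beta$ we get $\gamma\le\alpha$ and $\gamma\le\beta$, and since $\alpha+\beta\le\delta$ we get $\alpha\le\delta$ and $\beta\le\delta$; hence all of $\gamma\le\alpha\le\delta$ and $\gamma\le\beta\le\delta$ hold, so every connecting map appearing in the statement is defined.

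Next I would apply (SDL2) along the two chains $\gamma\le\alpha\le\delta$ and $\gamma\le\beta\le\delta$ to obtain
\[
\phi_{\delta,\alpha}\phi_{\alpha,\gamma}=\phi_{\delta,\gamma}=\phi_{\delta,\beta}\phi_{\beta,\gamma}.
\]
Now I would left-multiply by $(\phi_{\delta,\alpha})^{-1}=\psi_{\alpha,\delta}$ and right-multiply by $(\phi_{\gamma,\beta})^{-1}=\psi_{\gamma,\beta}$ (legitimate because all these maps are isomorphisms), giving
\[
\phi_{\alpha,\gamma}\psi_{\gamma,\beta}=\psi_{\alpha,\delta}\,\phi_{\delta,\beta}\,\phi_{\beta,\gamma}\,\psi_{\gamma,\beta}=\psi_{\alpha,\delta}\phi_{\delta,\beta},
\]
where the last equality uses $\phi_{\beta,\gamma}\psi_{\gamma,\beta}=\phi_{\beta,\gamma}(\phi_{\gamma,\beta})^{-1}$; here I must be careful that $\psi_{\gamma,\beta}=(\phi_{\beta,\gamma})^{-1}$, i.e.\ that $\phi_{\beta,\gamma}$ and $\psi_{\gamma,\beta}$ are the mutually inverse pair, which is just the defining convention $\psi_{\beta,\alpha}=(\phi_{\alpha,\beta})^{-1}$ applied with $\alpha=\beta$, $\beta=\gamma$. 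Composing in the correct order then collapses $\phi_{\beta,\gamma}\psi_{\gamma,\beta}$ to the identity on $G_\beta$ and the right-hand side becomes $\psi_{\alpha,\delta}\phi_{\delta,\beta}$.

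The only genuine subtlety — and the step I would watch most carefully — is bookkeeping of composition order and of which map is the inverse of which, since the paper writes maps on the right (so $\phi_{\alpha,\beta}\phi_{\beta,\gamma}$ means "first $\phi_{\alpha,\beta}$, then $\phi_{\beta,\gamma}$"). As long as the cancellations are performed on the correct side consistently with that convention, there is no real obstacle: the statement is a formal consequence of (SDL1), (SDL2), and invertibility, with no use of the lattice distributivity beyond what is needed to guarantee the comparabilities $\gamma\le\alpha,\beta\le\delta$. I would also remark that this lemma is precisely condition (iii) of Definition \ref{strongslgdefn} in the special case $\gamma=\alpha\beta$, so proving it here shows that the single family $\{\phi_{\alpha,\beta}\}$ automatically supplies the compatibility required for the two-family definition.
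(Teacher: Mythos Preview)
Your proposal is correct and is essentially identical to the paper's own argument: the paper proves Lemma~\ref{compatibility} by pointing back to the proof of Theorem~\ref{stronglatticethm}\ref{stl6}, which is precisely the computation you outline (derive the comparabilities $\gamma\le\alpha,\beta\le\delta$, factor $\phi_{\delta,\gamma}$ two ways via (SDL2), then cancel using the inverses). The one slip to fix is the parenthetical ``$(\phi_{\gamma,\beta})^{-1}=\psi_{\gamma,\beta}$'': no map $\phi_{\gamma,\beta}$ is defined since $\gamma\le\beta$, and the correct relation is $\psi_{\gamma,\beta}=(\phi_{\beta,\gamma})^{-1}$, exactly as you note a few lines later.
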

The proof is as in the proof of statement \ref{stl6} of Theorem \ref{stronglatticethm}. \qed

\begin{proof}[Proof (of Theorem \ref{sdl})]
From Definition \ref{strongsldefn}  $(S,\cdot)$ is a  semigroup.
In a similar way we see that  $(S,+)$ is also a semigroup.
It remains to prove the distributivity.
Consider $x,y,z\in S$ where $x\in G_\alpha$, $y\in G_\beta$ and  $z\in G_\sigma$.
Let $\delta =\alpha +\beta$ and $\gamma=\delta\sigma$. Then 
$x+y\in G_\delta$ and $(x+y)z\in G_\gamma$. Now
 \begin{align*}
 (x+y)z &= (x+y)\phi_{\delta,\gamma}z\phi_{\sigma,\gamma}\\
  &=(x\psi_{\alpha,\delta})\phi_{\delta,\gamma}z\phi_{\sigma,\gamma}\\
   &=x(\phi_{\alpha,\alpha\gamma}\psi_{\alpha\gamma,\gamma})z\phi_{\sigma,\gamma}
   &\text{ by Lemma \ref{compatibility}}\\
   &=x\phi_{\alpha,\alpha\sigma}\psi_{\alpha\sigma,\gamma}z\phi_{\sigma,\gamma}
   &\text{ since }\alpha\gamma=\alpha\sigma .
 \end{align*}
Also $xz\in G_{\alpha\sigma} ,\ yz\in G_{\beta\sigma}$ and 
$xz+yz\in G_{\gamma}$ since 
$\alpha\sigma +\beta\sigma =(\alpha +\beta)\sigma =\delta\sigma =\gamma$. 
Then
 \begin{align*} 
 xz+yz &=x\phi_{\alpha,\alpha\sigma}z\phi_{\sigma,\alpha\sigma}+
 y\phi_{\beta,\beta\sigma}z\phi_{\sigma,\beta\sigma}\\
 &=(x\phi_{\alpha,\alpha\sigma}z\phi_{\sigma,\alpha\sigma})\psi_{\alpha\sigma,\gamma}\\
&=x\phi_{\alpha,\alpha\sigma}\psi_{\alpha\sigma,\gamma}z\phi_{\sigma,\alpha\sigma}\psi_{\alpha\sigma,\gamma} \text{ since }\psi\text{ is a homomorphism.}\\
 &=  x\phi_{\alpha,\alpha\sigma}\psi_{\alpha\sigma,\gamma}z\phi_{\sigma,\gamma}
 \text{ since } \alpha\sigma=\alpha\gamma\le \gamma\le \sigma\text{ and by Lemma \ref{compatibility}.} 
  \end{align*}
It follows that
$(x+y)z=xz+yz$.
Similarly we can prove that
$z(x+y)=zx+zy$.
 \end{proof}

The following observations of semirings which are strong distributive lattices of group semirings are easy to verify.

\begin{thm}
Let $D$ be a distributive lattice and   
$$S=\cup_{\alpha\in D }G_\alpha $$
be a distributive lattice of group semirings where each $G_\alpha$ is a group semiring.
Let $E(S)$ denote the set of all multiplicative idempotents of $S$. Then
\begin{enumerate}[label=\emph{(\roman*)}]
\item $E(S)=\{e_\alpha:\alpha\in D \}$
where $e_\alpha$ is the identity of the group $G_\alpha$.
\item $E(S)$ is a subsemiring of $S$ and $(E(S),+,\cdot)$ is a 
distributive lattice isomorphic to $D$.
\item The semigroup $(S,\cdot)$ is a Clifford semigroup and
\item  The semigroup $(S,+)$ is a left normal band\cite{yamada1958note}.
\end{enumerate}
\end{thm}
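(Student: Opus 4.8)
The plan is to treat the four clauses in turn, using throughout the sum and product formulas for a strong distributive lattice of group semirings together with Theorem~\ref{stronglatticethm}; the elementary facts I will repeatedly invoke are that each $\phi_{\alpha,\beta}$ and each $\psi_{\beta,\alpha}$ is a group homomorphism (hence carries identities to identities), that $\psi_{\alpha,\alpha}$ is the identity map, and that $\psi_{\gamma,\beta}\psi_{\beta,\alpha}=\psi_{\gamma,\alpha}$ whenever $\gamma\le\beta\le\alpha$. For clause (i): on the one hand $e_\alpha\cdot e_\alpha=e_\alpha\phi_{\alpha,\alpha}\cdot e_\alpha\phi_{\alpha,\alpha}=e_\alpha e_\alpha=e_\alpha$, so every $e_\alpha$ lies in $E(S)$; on the other hand, if $f\in E(S)$ with $f\in G_\alpha$, then $f\cdot f$ equals the product of $f$ with itself inside the group $G_\alpha$ (again because $\phi_{\alpha,\alpha}$ is the identity), so $f\cdot f=f$ forces $f=e_\alpha$. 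Hence $E(S)=\{e_\alpha:\alpha\in D\}$.

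For clause (ii) I would first record the two identities $e_\alpha\cdot e_\beta=e_\alpha\phi_{\alpha,\alpha\beta}\cdot e_\beta\phi_{\beta,\alpha\beta}=e_{\alpha\beta}e_{\alpha\beta}=e_{\alpha\beta}$ and $e_\alpha+e_\beta=e_\alpha\psi_{\alpha,\alpha+\beta}=e_{\alpha+\beta}$, which show $E(S)$ is closed under both operations and hence is a subsemiring. Then the map $\theta\colon D\to E(S)$ given by $\theta(\alpha)=e_\alpha$ is a bijection — surjective by clause (i) and injective because the $G_\alpha$ are pairwise disjoint — and the two displayed identities say precisely that $\theta$ respects $+$ and $\cdot$. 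Transporting the distributive lattice structure of $D$ along $\theta$ yields that $(E(S),+,\cdot)$ is a distributive lattice isomorphic to $D$.

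For clause (iii) the key observation is that the multiplicative reduct $(S,\cdot)$ is exactly a strong semilattice of the groups $\{G_\alpha\}$ over the meet-semilattice $(D,\cdot)$ in the sense of Definition~\ref{strongsldefn}: conditions (i) and (ii) there are the defining properties of the family $\{\phi_{\alpha,\beta}\}$, and condition (iii) there is the product formula $xy=x\phi_{\alpha,\alpha\beta}\cdot y\phi_{\beta,\alpha\beta}$. Since a strong semilattice of groups is a Clifford semigroup (\cite{clifford1961algebraic}, \cite{howie1995fundamentals}), this clause follows at once. For clause (iv) I would argue analogously on the additive side. First $(S,+)$ is a band, since for $x\in G_\alpha$ we get $x+x=x\psi_{\alpha,\alpha+\alpha}=x\psi_{\alpha,\alpha}=x$. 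Next, for $x\in G_\alpha$, $y\in G_\beta$, $z\in G_\gamma$, using $x+y=x\psi_{\alpha,\alpha+\beta}$ and $x\psi_{\alpha,\alpha+\beta}\in G_{\alpha+\beta}$ one computes
\[(x+y)+z=\bigl(x\psi_{\alpha,\alpha+\beta}\bigr)\psi_{\alpha+\beta,\,\alpha+\beta+\gamma}=x\,\psi_{\alpha,\,\alpha+\beta+\gamma},\]
invoking $\psi_{\alpha,\alpha+\beta}\psi_{\alpha+\beta,\alpha+\beta+\gamma}=\psi_{\alpha,\alpha+\beta+\gamma}$; interchanging $y$ and $z$ gives the same value because $\alpha+\gamma+\beta=\alpha+\beta+\gamma$ in $D$. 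Thus $x+y+z=x+z+y$, so $(S,+)$ is a band satisfying the left-normal law — equivalently, it is the strong semilattice of the left-zero semigroups $(G_\alpha,+)$ over $(D,+)$ — whence $(S,+)$ is a left normal band \cite{yamada1958note}.

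The computations involved are routine manipulations with the two structure families. The points that need a little care are the clean identification in clause (iii) of $(S,\cdot)$ with a strong semilattice of groups, so that Clifford's structure theorem applies verbatim, and, in clause (iv), being precise that idempotency together with $x+y+z=x+z+y$ is exactly what ``left normal band'' means; beyond this bookkeeping I expect no genuine obstacle.
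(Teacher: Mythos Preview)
Your proof is correct and is precisely the kind of routine verification the paper has in mind: the paper gives no proof at all, merely introducing the theorem with ``The following observations \ldots\ are easy to verify,'' so your argument fills in exactly what was left to the reader. Two small remarks: first, the theorem as literally stated says only ``distributive lattice'' rather than ``strong distributive lattice,'' but the paper's lead-in sentence makes clear the strong case is intended, so your reliance on the families $\{\phi_{\alpha,\beta}\}$ and $\{\psi_{\beta,\alpha}\}$ is appropriate; second, clauses (i) and (iii) in fact need only the (non-strong) semilattice-of-groups structure of $(S,\cdot)$, so one could streamline those parts slightly, but your uniform use of the structure maps is perfectly fine and matches the spirit of the surrounding section.
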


\bibliographystyle{acm}
\bibliography{semiring-books}

\end{document}